\def\didi#1#2{{\def\@argone{#1}\frac{\partial \ifx\@argone\@empty{\phantom{#2}}\else#1\fi}{\partial #2}}}
\newcommand{\R} [0] { {\bf{R}} }
\newcommand{\T} [0] { {\bf{T}} }
\newcommand{\Z} [0] { {\bf{Z}} }
\newcommand{\Q} [0] { {\bf{Q}} }
\newcommand{\sphere}[1]{\mathbf{S}^{#1}}
\newcommand {\A} [0] { {\mathfrak A} }
\newcommand{\zero}[1]{0(#1)}
\newcommand{\cinfty}[1]{C^{\infty}(#1)}
\newcommand{\set}[1]{\left\{ #1 \right\}}
\newcommand{\textem}[1]{{\em #1}}
\newcommand{\centre}[1]{Z(#1)}
\renewcommand{\d}[0]{{\mathrm d}}
\newcommand{\ec}[0]{Euler characteristic}
\newcommand{\mane}[0]{Ma\~n\'e}
\newcommand{\completeintegrability}{complete and non-commutative integrability}
\newcommand{\mc}{{\mathfrak c}}
\newcommand{\Cw}[1]{{\rm C}^{\omega}{\rm (}[1]{\rm )}}
\newcommand{\ddim}[1]{{\rm ddim(}[1]{\rm )}}
\newcommand{\drank}[1]{{\rm drank(}[1]{\rm )}}
\newcommand{\pb}[2]{{ \left\{ #1, #2  \right\} }}
\newcommand{\spn}[1]{{\rm  span} \left\{ #1  \right\}}
\newcommand{\ie}{{\it i.e.}}
\newcommand{\cf}{{\it c.f.\ }}
\newcommand{\fibre}[1]{{\mathit{#1}}}
\newcommand{\surface}[1][S]{\mathfrak{#1}}
\def\includepdftex[#1]#2{%
  \resizebox{#1}{!}{\input{#2}}}
\newcommand{\projection}[1]{\pi_{#1}}
\newcommand{\homology}[3]{{\mathrm H}_{#1}(#2;#3)}
\newcommand{\homologyclass}[1]{\left[ #1 \right]}
\newcommand{\cover}[1]{\hat{#1}}
\let\@oldar\ar
\def\ar{\hskip 0.2em \@oldar}
\def\omega{\Omega}
\newtheorem{thm}{Theorem}
\newtheorem{lemma}{Lemma}[section]
\newtheorem{proposition}{Proposition}[section]
\newtheorem{definition}{Definition}
\newcounter{remark}
\renewcommand{\theremark}{\Roman{remark}}
\newenvironment{remark}{\begin{trivlist}\item[]\refstepcounter{remark}%
	{\tt Remark\ \theremark.}\ \nobreak\noindent\rm\ignorespaces}{%
	\end{trivlist}}
\newtheorem{question}{Question}
\let\ltbproof=\proof\def\proof[#1]{\ltbproof[Proof of #1]}
\title[Mech. Systems on Surfaces]{A Note on Integrable Mechanical Systems on Surfaces}
\author{Leo T. Butler}
\email{l.butler@cmich.edu}
\address{120 Pearce Hall, Department of Mathematics, Central Michigan University, Mt. Pleasant, MI, 48859}
\thanks{The author thanks G. Knieper for his helpful comments and suggestions.}
\subjclass[2000]{(primary) 37J35; (secondary) 70H06}
\keywords{Hamiltonian mechanics; integrability; topological obstructions}
\begin{document}

\begin{abstract}
  Let $\surface[S]$ be a compact, connected surface and $H \in C^2(T^*
  \surface[S])$ a Tonelli Hamiltonian. This note extends
  V.~V. Kozlov's result on the {\ec} of $\surface[S]$ when $H$ is
  real-analytically integrable, using a definition of
  topologically-tame integrability called semisimplicity. Theorem:
  If $H$ is $2$-semisimple, then $\surface[S]$ has non-negative {\ec};
  if $H$ is $1$-semisimple, then $\surface[S]$ has
  positive {\ec}.
\end{abstract}

\maketitle

\section{Introduction}
\label{sec:intro}

\subsection{Kozlov's Theorem} \label{sec:koz} Say that a
\textem{natural} mechanical system is a Hamiltonian that is a sum of
kinetic and potential energies. Let $\surface[S]$ be a compact surface
and $H : T^* \surface[S] \to \R$ be an analytic natural mechanical
system. Kozlov proved if $H$ enjoys a second, independent analytic
integral $F$, then the {\ec} of $\surface[S]$ is non-negative and so
it is homeomorphic to $\sphere{2}$, $\T^2$ or a non-orientable
quotient thereof~\cite{MR556099}. The hypotheses of Kozlov's theorem
can be relaxed as follows: (i) $H$ need only be assumed to be
fibre-wise strictly convex and super-linear (i.e. `Tonelli'); (ii)
analyticity can be reduced to the combined hypotheses that $H$ and $F$
are $C^2$, and that there is an energy level $H^{-1}(c)$ where $c >
\min \set{H(x,0) \mid x \in \surface[S]}$, such that the critical set
of $F$ intersects a fibre of the foot-point projection in only finitely
many points~\cite{MR1411677}.

The present note has two aims. First, it presents a proof of Kozlov's
theorem based on the theory of semisimplicity developed
in~\cite{MR2136534}; see definition \ref{def:gss} below. Second, it
extends Kozlov's theorem to non-commutatively integrable Tonelli
Hamiltonians. The latter is a non-trivial extension:
in~\cite{MR2256655} there are Tonelli Hamiltonians that are
constructed which are non-commutatively integrable and semisimple on
the unit disk bundle, but which are \textem{not} tangent to a
semisimple singular Lagrangian fibration. In essence, the na\"{i}ve
trick of discarding extra integrals to achieve complete integrability
necessarily expands the critical set, and in the above-quoted example,
the critical set expands from a real-analytic set to a wild set
analogous to the Fox wild arc.

\subsection{Non-commutative integrability} \label{ssec:ci} Let
$\Sigma$ be a smooth $n$-dimensional manifold. The canonical Poisson
structure on the cotangent bundle $T^* \Sigma$ permits one to define a
Poisson algebra structure on $\cinfty{T^*\Sigma}$ and consequently
each smooth function $H : T^* \Sigma \to \R$ induces a Hamiltonian
vector field $X_H$ defined by
\begin{equation} \label{eq:xh}
  X_H = \pb{H}{\ } = \sum_{i=1}^n -\didi{H}{x^i} \didi{}{y^i} + \didi{H}{y^i} \didi{}{x^i},
\end{equation}
where $(x^i, y^i)$ are canonical coordinates. A first integral of the
Hamiltonian vector field $X_H$ is a smooth function $F$ which Poisson
commutes with $H$: $\pb{H}{F} = 0$.

For a subspace $\A \subset \cinfty{T^* \Sigma}$ and $p \in T^*
\Sigma$, let $\d_p \A = \spn{ df(p) \mid f \in \A}$. Following
\cite{MR2031454}, the differential dimension of $\A$ is defined to be
$\sup_{p} \dim \d_p \A$. Let $\A \subset \cinfty{T^* \Sigma}$ be a
subspace of first integrals of $H$ that contains $H$ and let
$\centre{\A}$ be the subspace of $\A$ which Poisson commutes with all
of $\A$. Let $k$ (resp. $l$) be the differential dimension of $\A$
(resp. $\centre{\A}$). Say that a point $p \in T^* \Sigma$ is
\textem{regular} for $\A$ if $\dim \d_q \A =k$ and $\dim \d_q Z(\A) =
l$ for all $q$ in the $\A$-level set passing through $p$. We say that
$H$ (or $\A$) is \textem{non-commutatively integrable} if $k+l = 2n$
and the set of regular points is dense.

Nehoro{\v{s}}ev \cite{MR0365629}, who generalized the Liouville-Arnol'd
theorem \cite{MR1345386}, proved that if $H$ is non-commutatively
integrable and $p$ is a regular point, then there is a neighbourhood
$U$ with coordinate chart $(\theta,I,x,y) : U \to \T^k \times \R^k
\times \R^{2(n-k)}$, where the Poisson bracket is canonical and
$H=H(I)$. Dazord and Delzant \cite{MR906389} globalized this result
and showed that the regular point set $X \subset T^* \Sigma$ is fibred
by isotropic $k$-dimensional tori $\fibre{F}=\T^k$ and the quotient of
$X$ by these fibres, $P$, is a Poisson manifold with a foliation
$\zeta$ by symplectic leaves. When this foliation is a fibration, one
has the following diagram
\begin{equation}
  \label{eq:p-fibration}
  \xymatrix{
    \fibre{F} \ar@{^{(}->}[r]^{\iota_{\fibre{F}}} &
    X \ar@{^{(}->}[r]^{\iota_X} \ar@{->>}[d]|(.4){g} \ar@{.>>}[dr]|(.4){G} &
    T^* \Sigma \\
    S \ar@{^{(}->}[r]^{\iota_S} &
    P \ar@{->>}[r]^{\projection{P}} &
    Q
  }
\end{equation}
where $\iota_{\bullet}$ is an inclusion map, $g$ is the fibration
map, $S$ is a symplectic leaf of $P$, $Q$ is an integral affine
manifold of dimension $k$ and $\projection{P}^* \cinfty{Q}$ is the
centre of $\cinfty{P}$ which induces the Hamiltonian vector fields
that are tangent to the isotropic fibres of $g$.

\subsection{Geometric semisimplicity} 
Let us abstract the notion of \completeintegrability. A smooth flow
$\varphi : M \times \R \to M$ is {\em integrable} if there is an open,
dense subset $R \subset M$ that is covered by angle-action charts
which conjugate $\varphi$ to a translation-type flow on the tori of
$\T^k \times \R^l$. There is an open dense subset $L \subset R$ fibred
by $\varphi$-invariant tori; let $f : L \to B$ be the induced smooth
quotient map and let $\Gamma = M - L$ be the {\em singular set}. If
$\Gamma$ is a tamely-embedded polyhedron, then $\varphi$ is said to be
$k$-{\em semisimple} with respect to $(f,L,B)$, or just
semisimple~\cite{MR2136534}. Of most interest is when $\varphi$ is a
Hamiltonian flow on a cotangent bundle or possibly a regular
iso-energy surface.
\begin{definition}[\cf \cite{MR897007,MR2136534}] \label{def:gss}
  A Hamiltonian flow is {\em geometrically $k$-semisimple} if it is
  $k$-semisimple with respect to $(f,L,B)$ and $f$ is an isotropic
  fibration.
\end{definition}
In this case, because the fibres of $f$ are isotropic, $\varphi$ is
non-commutatively integrable, so geometric semisimplicity is a
topologically-tame type of non-commutative
integrability. Taimanov~\cite{MR897007} introduced a related notion of
geometric simplicity, see sections 2.2-2.3 of~\cite{MR2136534} for
further discussion. If $\varphi$ is real-analytically
non-commutatively integrable, then the triangulability of
real-analytic sets implies that $\varphi$ is geometrically semisimple;
and $B$ may be taken to be a disjoint union of open balls. On the
other hand, geometric semisimplicity is a weaker property than
real-analytic non-commutative integrability~\cite{MR2136534}. A basic
question is:

\begin{question} \label{q:a}
  What are the obstructions to the existence of a geometrically
  semisimple (resp. semisimple, completely integrable) flow?
\end{question}

\subsection{Results}
Here are the two main theorems of this note. In both cases,
$\surface[S]$ is a compact, connected surface and $H : T^* \surface[S]
\to \R$ is a $C^2$ Tonelli Hamiltonian.

\begin{thm}[\cf Kozlov~\cite{MR556099}]
  \label{thm:kozA}
  If $H$ is $1$- or $2$-semisimple, then $\surface[S]$
  has non-negative {\ec}.
\end{thm}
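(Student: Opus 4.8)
The plan is to reduce the semisimple case to the structure that Kozlov exploited in the real-analytic setting: on a high enough energy level, a second integral whose critical set meets fibres in finitely many points forces $\chi(\surface[S]) \geq 0$. Concretely, suppose for contradiction that $\chi(\surface[S]) < 0$, so the universal cover of $\surface[S]$ is the disk and $\pi_1(\surface[S])$ contains a free group of rank $\geq 2$; this is the hypothesis I want to contradict. Fix $c > \min_x H(x,0)$ a regular value of $H$ large enough that $M_c := H^{-1}(c)$ is a connected contact-type hypersurface on which the Hamiltonian flow is the reparametrized geodesic-like flow of a Finsler metric (by Tonelli convexity and super-linearity, for $c$ large $M_c$ is diffeomorphic to the unit cosphere bundle $S^*\surface[S]$, which is a circle bundle over $\surface[S]$).

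Next I would invoke geometric (or plain) $k$-semisimplicity of $H$ restricted to $M_c$: there is an open dense $L \subset M_c$ fibred by invariant tori via $f : L \to B$, with singular set $\Gamma = M_c - L$ a tamely-embedded polyhedron of positive codimension, and for $k=2$ the fibres are Lagrangian in $M_c$ (one-dimensional, i.e. the flow lines), while for $k=1$ they are the flow circles and $\dim B = 2$. The key step is to produce from this data a first integral $F$ on $M_c$ (or on a neighbourhood of $M_c$ in $T^*\surface[S]$) whose critical set is contained in $\Gamma \cup \{\d H = 0\}$ and hence, being polyhedral of positive codimension, meets the circle fibre of $\projection{}: S^*\surface[S] \to \surface[S]$ over a generic footpoint in only finitely many points — this is exactly the relaxed hypothesis (ii) from \cite{MR1411677} that the excerpt grants us. Taking $F$ to be, locally, one of the action coordinates $I$ from Nehoro{\v{s}}ev's chart and patching via a partition of unity subordinate to the $f$-fibration over $B$ (the tameness of $\Gamma$ ensures the patched function is still $C^2$ with critical set controlled by $\Gamma$), or more invariantly using $\projection{P}^*\cinfty{Q}$ from diagram~\eqref{eq:p-fibration}, gives the required independent integral.

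With such an $F$ in hand, the conclusion follows from the extended Kozlov theorem cited as \cite{MR1411677}: $H$ and $F$ are $C^2$, independent on a dense set, and the critical set of $F$ meets a fibre of the footpoint projection over $M_c$ finitely — therefore $\chi(\surface[S]) \geq 0$, contradicting our assumption. The $k=1$ versus $k=2$ dichotomy enters only in how much room there is in $B$: for $k=2$, $B$ is $1$-dimensional and the tori are the orbits themselves, so $F$ can be built directly from the fibration map $f$; for $k=1$ the orbits are circles and $f:L\to B$ with $\dim B = 2$ already supplies two functionally independent integrals locally, and one selects a nonconstant combination avoiding the potential well.

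The main obstacle I anticipate is the patching step: a priori the local action functions from different Nehoro{\v{s}}ev charts need not agree, and the integral affine structure on $B$ (or on $Q$) may have nontrivial monodromy, so one cannot simply glue a global $F$. The tameness of $\Gamma$ — that it is a polyhedron of positive codimension rather than a wild set — is precisely what rescues the argument: it lets one choose a single chart whose domain, after removing $\Gamma$, still has footpoint-projection image all of $\surface[S]$ minus a finite set on each fibre, so a global patch is unnecessary. Verifying that the semisimplicity hypothesis delivers this (rather than merely a dense open $L$ that could, pre-tameness, project to a Cantor-like set on fibres, as in the \cite{MR2256655} example for the na\"{i}ve reduction) is the crux, and it is exactly where the definition of semisimplicity — as opposed to bare non-commutative integrability — does its work.
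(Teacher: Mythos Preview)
Your strategy is to manufacture a global second integral $F$ on a super-critical energy level whose critical set meets each cotangent fibre in finitely many points, and then invoke the $C^2$ extension of Kozlov's theorem from \cite{MR1411677}. The gap is that you never actually construct such an $F$, and the two devices you propose both fail. Patching local action coordinates by a partition of unity does not produce a first integral: if $\rho_i$ are cutoff functions subordinate to angle--action charts, $\sum_i \rho_i I_i$ is not constant along orbits unless the $\rho_i$ are themselves first integrals, and in that case the sum acquires new critical points wherever the $\rho_i$ have them---precisely the loss of control over the critical set you need to avoid. The alternative you float, using tameness of $\Gamma$ to find a \emph{single} Nehoro{\v{s}}ev chart whose domain already meets each fibre of $\pi : M_c \to \surface[S]$ in a cofinite set, does not follow from the definition of semisimplicity: tameness of the polyhedron $\Gamma$ constrains how $\Gamma$ sits inside $M_c$, not how any individual chart domain projects to $\surface[S]$. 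The introduction already warns (via the example from \cite{MR2256655}) that reorganising integrals can turn a tame singular set into a wild one, so this step is exactly where one cannot be casual.

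The paper's proof bypasses the construction of $F$ altogether and argues directly with the topology of the fibration $f : L \to B$. After the refinement of Proposition~\ref{prop:refinement}, every component of $B$ is homotopy equivalent to a point or a circle, so every component $L_i$ of $L$ is homotopy equivalent to $\T^2$ or to a $\T^2$-bundle over $\T^1$; in either case $\pi_1(L_i)$ is solvable. By \cite[Lemma~15]{MR2136534} (diagram~\eqref{eq:pi1-epi}), some $\pi_1(L_i)$ maps to a finite-index subgroup of $\pi_1(\surface[S])$, so $\pi_1(\surface[S])$ is virtually solvable and hence $\chi(\surface[S]) \geq 0$. The $1$-semisimple case is handled by the same mechanism inside the proof of Theorem~\ref{thm:kozB}, after restricting to an energy level. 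Incidentally, your dimension bookkeeping at the end is reversed: for $k=2$ the fibres are Lagrangian $2$-tori, not the flow orbits.
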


\begin{thm}
  \label{thm:kozB}
  If $H$ is $1$-semisimple, then
  $\surface[S]$ is homeomorphic to $\sphere{2}$ or $\R P^2$.
\end{thm}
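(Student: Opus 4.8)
The plan is to bootstrap from Kozlov's theorem. Theorem~\ref{thm:kozA} already yields $\chi(\surface[S])\ge 0$ when $H$ is $1$-semisimple, so $\surface[S]$ is homeomorphic to $\sphere{2}$, $\R P^2$, $\T^2$, or the Klein bottle $K$, and it remains to exclude the last two. Since $K$ is doubly covered by $\T^2$, I would first check that $1$-semisimplicity pulls back along the induced finite cover $T^*\T^2\to T^*K$: the angle--action charts lift, an isotropic circle fibration lifts to an isotropic circle fibration, and the preimage of a tamely embedded polyhedron is again tamely embedded. So it suffices to prove that no $C^2$ Tonelli Hamiltonian $H$ on $T^*\T^2$ is $1$-semisimple.

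Suppose one were. Since $k=1$ the invariant tori are circles, i.e. periodic orbits of $X_H$, and $f:L\to B$ is automatically an isotropic fibration, so the Dazord--Delzant picture \eqref{eq:p-fibration} applies. Fixing a large regular value $c$, the Tonelli hypothesis makes $N:=H^{-1}(c)$ a closed, fibrewise star-shaped hypersurface, hence diffeomorphic by a radial isotopy to the unit cotangent bundle $S^*\T^2\cong\T^3$, on which $\lambda|_N$ is a contact form and $X_H|_N$ a reparametrisation of its Reeb flow. Restricting \eqref{eq:p-fibration} to energy $c$ exhibits an open dense subset $L_N:=L\cap N\subset N$ that is a circle bundle over the orientable symplectic leaf $S_c$, with fibres exactly the periodic orbits of $X_H|_N$ and with $N\setminus L_N$ a tamely embedded polyhedron of dimension at most $2$.

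The contradiction should come from confronting this orbit fibration with the footpoint projection $\pi:N\to\T^2$. For $c$ large the fibrewise derivative $\partial_y H$ is nowhere zero on $N$, so every periodic orbit $\gamma\subset N$ projects to a non-constant loop $\pi(\gamma)\subset\T^2$, whose class $[\pi(\gamma)]\in H_1(\T^2;\Z)$ — an integer-valued continuous function of $\gamma$ — is constant, say equal to $h$, on each connected component $L_N^0$ of $L_N$. One then shows $h\ne 0$: at high energy the Reeb orbits shadow the geodesics of the Finsler metric at infinity, and an argument in the spirit of Hedlund's theorem on minimal geodesics of $\T^2$ rules out a contractible closed extremal, and more generally an open family of freely homotopic closed ones. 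Finally, the Liouville measure restricted to $L_N^0$ is $X_H$-invariant, so its Schwartzman asymptotic cycle is a positive multiple of $h\ne 0$, whereas the asymptotic cycle of the full high-energy flow has vanishing $\T^2$-component by the reversibility of the leading behaviour; this forces $L_N$, hence $L$, not to be dense, a contradiction. Therefore $\surface[S]$ is not $\T^2$ or $K$, and with Theorem~\ref{thm:kozA} we conclude $\surface[S]\cong\sphere{2}$ or $\R P^2$. I expect the main obstacle to be exactly this last paragraph: $\T^3$ does carry circle fibrations, so the obstruction cannot be purely $3$-manifold-topological and must exploit that the periodic-orbit fibration originates from a Tonelli flow — the non-triviality of $h$, and controlling the asymptotic cycle for genuinely non-reversible $H$, being the delicate points.
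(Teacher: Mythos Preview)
Your reduction to $\T^2$ via the Klein-bottle double cover, and your focus on the classes $h=\homologyclass{\pi(\gamma)}\in\homology{1}{\T^2}{\Z}$ carried by the fibre orbits, are in the spirit of the paper's argument. But the final paragraph---which you yourself flag---has two real gaps. First, $h\neq 0$ is unjustified: contractible closed geodesics on $\T^2$ certainly exist (a bump metric of revolution carries an open one-parameter family of them), and Hedlund's theory concerns only minimizing geodesics, so nothing prevents an entire component $L_N^0$ from being fibred by null-homologous orbits. Second, even granting $h_i\neq 0$ on every component and granting that the Liouville asymptotic cycle on $H^{-1}(c)$ has vanishing $\T^2$-part (this is in fact true for \emph{any} Tonelli $H$, by a divergence-theorem computation in each cotangent fibre---no appeal to reversibility is needed), your conclusion does not follow: the singular set has measure zero, so the total cycle equals $\sum_i c_i h_i$ with $c_i>0$, and distinct components can carry distinct $h_i$ that cancel. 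Nothing forces $L_N$ to miss an open set.

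The paper avoids both issues by reversing the direction of the argument. Instead of showing the fibre classes are nonzero, it shows they are \emph{exhaustive}: on a super-critical level every integral class $\omega\in\homology{1}{\Sigma}{\Z}$ contains a closed orbit $\gamma$; if $\gamma$ lies in some component $X'$ of the (refined) regular set then $\omega\in\Z\homologyclass{F'_{p'}}$, and if not one approximates a multiple of $\gamma$ by a periodic orbit through a nearby regular point, closed up by a short arc in a (contractible) cotangent fibre, so that $\Q\omega\subset\Q\homologyclass{F'_{p'}}$. Since only finitely many components occur, $\homology{1}{\Sigma}{\Q}$ is a finite union of lines and hence at most one-dimensional---already a contradiction for $\T^2$. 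The Klein bottle then falls by exactly your double-cover observation.
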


\begin{remark}
  In two degrees of freedom, as here, $1$- or $2$-semisimplicity
  implies the fibres of the fibration are isotropic, so $1$- or
  $2$-semisimplicity implies \textem{geometric} $1$- or
  $2$-semisimplicity.
\end{remark}

\begin{remark}
  In an earlier version of this note~\cite{arxiv1208.1406}, Theorem
  \ref{thm:kozB} was proven under the additional hypothesis that $H$
  is \textem{reversible}. That proof uses a result of Glasmachers \&
  Knieper~\cite{MR2887674,MR2746954} concerning a zero-entropy
  geodesic flow of a reversible Finsler metric on $\T^2$. The present
  note dispenses with the reversibility hypothesis.
\end{remark}

\begin{remark}
  Bangert has asked a series of questions in \cite{MR2395221}
  concerning integrable Tonelli Hamiltonians which are integrable in a
  weaker sense--the additional integral need only be independent of
  $H$ on an open dense set. These questions are very interesting but
  beyond the scope and techniques of this paper. Likewise, the paper
  by Bialy proves an extension of Kozlov's theorem for geodesic flows
  when the additional integral satisfies his condition $\aleph$
  \cite{MR2671281}. That work relies on properties of minimizing
  geodesics.
\end{remark}

\section{Preliminaries}
\label{sec:prelim}

Let us recall a few items concerning a geometric semisimplicity. Let
$\Sigma$ be a compact smooth manifold and $H : T^* \Sigma \to \R$ be a
$C^2$ Tonelli Hamiltonian that is geometrically semisimple with
respect to $(f,L,B)$. The complement $\Gamma = T^* \Sigma - L$ is a
tamely embedded polyhedron, so the number of components of $L \cap
H^{-1}((-\infty, c])$ is finite for any $c$. \cite[Lemma
15]{MR2136534} implies that there is a component $L_i \subset L$ such
that $\pi \iota_{L_i}$ has a finite-index image in $\pi_1(\Sigma)$:
\begin{equation}
  \label{eq:pi1-epi}
  \xymatrix{
    \pi_1(L_i) \ar[r]^{\iota_{L_i,*}} \ar@/^2em/[rr]^{\pi_* \iota_{L_i,*}} & \pi_1(T^* \Sigma) \ar[r]^{\pi_*} & \pi_1(\Sigma).
  }
\end{equation}

Suppose that $B_0 \subset B$ is a nowhere dense subset such that
$f^{-1}(B_0) \cup \Gamma = \Gamma_1$ is tamely embedded polyhedron
whose complement $L_1 = f^{-1}(B_1)$, $B_1 = B - B_0$, is dense. One
calls $(f_1=f|L_1, L_1, B_1)$ a refinement of $(f,L,B)$. In
\cite[Lemma 18]{MR2136534} it is proven that
\begin{proposition}
  \label{prop:refinement}
  If $\dim B \leq 2$, then $(f,L,B)$ has a refinement $(f_1,L_1,B_1)$
  such that each component of $B_1$ is homotopy equivalent to either a
  point or $\sphere{1}$.
\end{proposition}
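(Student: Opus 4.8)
The plan is to produce the refinement directly from a triangulation of $B$. Recall that $B$ is a smooth manifold — the base of the torus fibration $f\colon L\to B$ — possibly disconnected and possibly non-compact, and by hypothesis $\dim B\le 2$. I would dispose of the low-dimensional components first: a component of dimension $0$ is a point, and a component of dimension $1$ is diffeomorphic to $\R$ or to $\sphere{1}$, hence already homotopy equivalent to a point or to $\sphere{1}$; on these one may take the deleted set to be empty. So the real content is in the $2$-dimensional components, and it suffices to treat one such component, which I again call $B$.

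Every smooth surface is triangulable, so fix a triangulation of $B$ and set $B_0$ equal to its $1$-skeleton and $B_1=B-B_0$. Then $B_1$ is a disjoint union of open $2$-simplices, each homeomorphic to $\R^2$ and so contractible, while $B_0$ is a closed, nowhere dense subcomplex. Because $f$ is an open surjection and $L$ is dense in $T^*\Sigma$, the set $L_1=f^{-1}(B_1)$ is dense in $T^*\Sigma$, and $f_1=f|L_1$ remains a torus fibration over $B_1$. Thus $(f_1,L_1,B_1)$ is the desired refinement once one verifies that $\Gamma_1=f^{-1}(B_0)\cup\Gamma$ is a tamely embedded polyhedron.

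For that last point: over the $1$-complex $B_0$ the map $f$ is a locally trivial $\T^k$-bundle — a product over each simplex, since simplices are contractible — so $f^{-1}(B_0)$ carries a natural polyhedral structure. It is closed in $L$, and since $L$ is open in $T^*\Sigma$ any limit point of $f^{-1}(B_0)$ lying outside $f^{-1}(B_0)$ must lie in $\Gamma$; hence $\Gamma_1$ is closed. As $\Gamma$ is already a tamely embedded polyhedron, one finishes by the same PL general-position bookkeeping used in the proof of \cite[Lemma 15]{MR2136534}: choose the triangulation of $B$, and triangulations of $\Gamma$ and of $f^{-1}(B_0)$, so that they agree on the overlap and are compatible with the tame structure of $\Gamma$ inside $T^*\Sigma$. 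This compatibility step — making $f^{-1}(B_0)$ meet $\Gamma$ in a subpolyhedron and sit tamely in $T^*\Sigma$ — is the only genuine obstacle; everything else is immediate from the definitions.

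I would add one remark on why the statement permits an $\sphere{1}$ factor even though the triangulation construction yields only contractible pieces: if one prefers not to cut annular regions of $B$, one can replace the full $1$-skeleton by the core graph of a handle decomposition of each $2$-dimensional component, leaving pieces homotopy equivalent to points or to $\sphere{1}$; but since contractible pieces are subsumed under the ``point'' alternative, the triangulation version already proves Proposition~\ref{prop:refinement}.
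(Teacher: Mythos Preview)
The paper does not give a self-contained proof here; it cites \cite[Lemma~18]{MR2136534} and supplies only the schematic in Figure~\ref{fig:refinement}. That schematic indicates an approach different from yours: rather than triangulating $B$ and deleting the full $1$-skeleton, one cuts $B$ along a \emph{compact} family of curves, separating off the ends $E_j$ of $B$ (half-open cylinders, each homotopy equivalent to $\sphere{1}$) and then dicing the remaining compact core into disks. The $\sphere{1}$ alternative in the statement is thus not a permissive afterthought; it is exactly what allows the cut locus $B_0$ to be compact.

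That compactness is what closes the tameness step you correctly flag as the only genuine obstacle. With $B_0$ compact, $f^{-1}(B_0)$ is a compact smooth submanifold of $L$, hence sits at positive distance from $\Gamma$; then $\Gamma_1=f^{-1}(B_0)\cup\Gamma$ is a disjoint union of two closed tame polyhedra and its tameness is immediate. In your construction $B$ is typically non-compact (it is the quotient of an open dense set in a compact level), so the $1$-skeleton $B_0$ is non-compact and $f^{-1}(B_0)$ accumulates on $\Gamma$. Your proposed fix --- choose triangulations that ``agree on the overlap'' --- does not address this: $f^{-1}(B_0)$ and $\Gamma$ are disjoint, so there is no overlap; the problem is the limiting behaviour of $f^{-1}(B_0)$ as it approaches $\Gamma$, and nothing in the hypotheses controls how the smooth fibration $f$ degenerates there. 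So the triangulation route, as written, leaves a real gap. Your closing remark about using a handle decomposition instead is in fact much closer to the paper's construction than you suggest, and is the version that actually makes the tameness verification go through.
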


\begin{figure}[h]
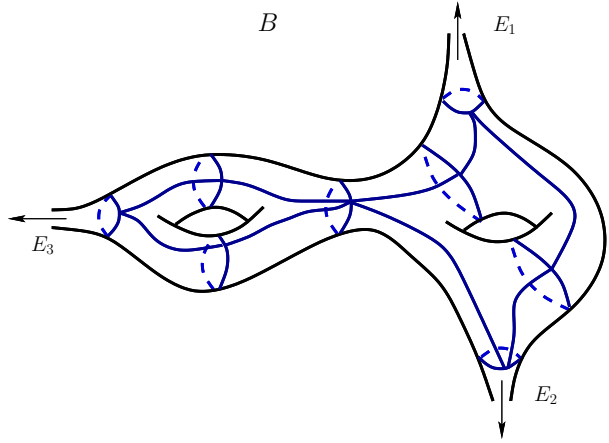

  \centering
  \includepdftex[8cm]{refinement.pdf_t}
  \caption{Schematic proof of ~\ref{prop:refinement}: We cut the base
    $B$ along the blue curves; the ends $E_j$ are cylinders and the
    ``compact'' part of $B_1$ is a union of disks.}
  \label{fig:refinement}
\end{figure}

Let $\zero{\Sigma} \subset T^* \Sigma$ be the zero section of the
cotangent bundle of $\Sigma$. An \textem{exact} Lagrangian graph
$\Lambda \subset T^* \Sigma$ is the graph of an exact $1$-form;
$\zero{\Sigma}$ is an example. A Tonelli Hamiltonian $H : T^* \Sigma
\to \R$ is fibre-wise strictly convex and grows super-linearly in the
fibres. Consequently, there is a $c \in \R$ such that $\set{H \leq c}$
contains $\zero{\Sigma}$ and therefore an exact Lagrangian graph. Let
$\mc$ be the infimum of the set of $c$ such that $\set{H \leq c}$
contains an exact Lagrangian graph; this is {\mane}'s critical
value. For all $c > \mc$, the sublevel set $\set{H \leq c}$ is
fibre-wise strictly convex and contains an exact Lagrangian graph. The
Hamiltonian flow of $H$ on an energy level $c$ above $\mc$ is, up to
an orbit equivalence, the geodesic flow of a Finsler metric.

\section{Proofs}
\label{sec:proofs}
\let\Sigm\Sigma
\renewcommand{\Sigma}[1][S]{\surface[#1]}

Proposition \ref{prop:refinement} allows us to prove Theorem
\ref{thm:kozA}. The manifold $\Sigm$ in the previous section is the
surface $\Sigma$.
\begin{proof}[Theorem \ref{thm:kozA}]
  Suppose that $H$ is geometrically $2$-semisimple. We will deal with
  the case of $1$-semisimplicity below.
  By Proposition \ref{prop:refinement} we can suppose that each
  component of the base of the fibration $f$ is homotopy equivalent to
  a point or a circle. Thus, each component of $L_i \subset L$ is
  homotopy equivalent to $\T^2$ or a $\T^2$-bundle over $\T^1$. In
  both cases, $\pi_1(L_i)$ is solvable, and so $\pi_1(\Sigma)$
  contains a solvable subgroup of finite index. Since $\Sigma$ is a
  surface, the theorem is proved.
\end{proof}

\begin{proof}[Theorem \ref{thm:kozB}]
  To prove Theorem \ref{thm:kozB}, we must adapt the diagram in
  \eqref{eq:p-fibration} to our needs. In this case, the fibre $F =
  \T^1$, the base of the fibration $P (=B)$ is a $3$-dimensional Poisson
  manifold with a foliation $\zeta$ by symplectic surfaces $S$. The
  foliation $\zeta$ is a fibration, in fact, because the Casimirs of $P$
  are functionally dependent on the reduction of the Hamiltonian
  $H|X$. In this case, the quotient of $P$ by $\zeta$, $Q$, is a finite
  union of $1$-manifolds: $Q = \cup_i Q_i$ where $Q_i \simeq \R$ or
  $\T^1$. Since $H|X = h \circ G$ for some $h \in C^2(Q)$, the Tonelli
  property of $H$ implies that no component of $Q$ is a circle.

  It follows that there is a component $X_i = G^{-1}(Q_i)$ such that 
  \begin{equation}
    \label{eq:pi1-epi-X}
    \xymatrix{
      \pi_1(X_i) \ar[r]^{\iota_{X_i,*}} \ar@/^2em/[rr]^{\pi_* \iota_{X_i,*}} & \pi_1(T^* \Sigma) \ar[r]^{\pi_*} & \pi_1(\Sigma)
    }
  \end{equation}
  has a finite index image. Since $X_i$ is homotopy equivalent to an $F
  = \T^1$-principal bundle over the symplectic surface $S_i$, a leaf of
  $\zeta | X_i$, it remains to examine the possibilities.

  \subsection*{$S_i$ is compact} In this case, $G^{-1}(q)$ is compact
  for any $q \in Q_i$, and therefore it must be a connected component of
  an energy level. Since, above the critical value, the energy levels
  are connected, $G^{-1}(q)$ is an energy level. If the {\ec} of
  $\Sigma$ is negative, then $\pi_1(\Sigma)$ contains no non-trivial
  normal abelian subgroups. Therefore, the inclusion $F \hookrightarrow
  T^* \Sigma$ is null-homotopic; this implies that all orbits of the
  Tonelli Hamiltonian in a super-critical energy surface are
  contractible--absurd. Therefore, the {\ec} of $\Sigma$ must be
  non-negative.

  Since every orbit of the Tonelli Hamiltonian is closed on a
  super-critical energy level, the {\ec} of $\Sigma$ must be positive.

  \subsection*{$S_i$ is non-compact} Let $c$ be an energy level such
  that $\pi(S_i) = G(H^{-1}(c)) = q$. Let $X_c = G^{-1}(q)$, $g_c =
  g|X_c$ and $P_c = g(X_c)$. $X_c \subset H^{-1}(c)$ has a complement
  $\Gamma_c = \Gamma \cap H^{-1}(c)$ and is fibred by $F = \T^1$. The
  Hamiltonian flow of $H$ restricted to $H^{-1}(c)$ is therefore
  $1$-semisimple with respect to $(g_c,X_c,P_c)$. Now, $S_c$ is a
  symplectic leaf of the foliation $\zeta$ and therefore is a
  connected symplectic surface. By Proposition
  \ref{prop:refinement}, there is a refinement $(g_c', X_c', P_c')$ such
  that each component of $P_c'$ is homotopy equivalent to a point or
  $\T^1$. Moreover, by \cite[Lemma 15]{MR2136534}, the inclusion of one
  of the components of $X_c'$ in $T^* \Sigma$ is almost surjective on
  $\pi_1$. But the components of $X_c'$ are homotopy equivalent to
  $\T^1$ or $\T^2$ (principal $\T^1$-bundles over $*$ and $\T^1$
  respectively).

  Therefore, $\pi_1(\Sigma)$ contains a finite-index abelian
  subgroup. Hence the {\ec} of $\Sigma$ is non-negative (this completes
  the proof of Theorem~\ref{thm:kozA}). It therefore remains to prove
  that the {\ec} of $\Sigma$ is positive. To do so, we will prove

  \begin{lemma}
    \label{lem:rkh1}
    If $H$ is $1$-semisimple, then $\dim \homology{1}{\Sigma}{\Q} = 0$.
  \end{lemma}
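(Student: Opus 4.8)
The plan is to show that $1$-semisimplicity forces $\pi_1(\Sigma)$ to be finite, whence $\Sigma$ is homeomorphic to $\sphere{2}$ or $\RP^2$ and $\dim\homology{1}{\Sigma}{\Q}=0$. Given that the {\ec} of $\Sigma$ is already known to be non-negative, it is enough to rule out $\Sigma\cong\T^2$ and $\Sigma\cong\T^2/(\Z/2)$ (the Klein bottle); in fact the argument below excludes every closed surface with infinite fundamental group. The mechanism is a uniform bound on the lengths of closed geodesics at a super-critical energy, which is impossible once $\pi_1(\Sigma)$ is infinite.

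First I would fix $c>\mc$ so that $H^{-1}(c)$ is super-critical and the restriction of the flow to it is $1$-semisimple (as above; the fibration restricts and $\Gamma\cap H^{-1}(c)$ is a tame polyhedron for generic $c$). This restricted flow is orbit-equivalent to the geodesic flow $\psi$ of a Finsler metric on $\Sigma$, and, since $1$-semisimplicity depends only on the orbit foliation and the tameness of the singular set, $\psi$ is itself $1$-semisimple on $T^1\Sigma$, say with respect to $(g,X,B)$. Now $\psi$ is the Reeb flow of the canonical contact form $\lambda$ on $T^1\Sigma$, and a closed geodesic of Finsler-length $\ell$, traversed once, is a closed Reeb orbit of period $\int\lambda=\ell$. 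By Proposition~\ref{prop:refinement} refine $(g,X,B)$ so that each component of $B$ is homotopy equivalent to a point or to $\T^1$; since the singular polyhedron sits in the compact manifold $T^1\Sigma$, there are finitely many components $B_1,\dots,B_m$.

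On each $B_j$ the invariant circles $F_b$, $b\in B_j$, are closed Reeb orbits depending smoothly on $b$, so $b\mapsto\int_{F_b}\lambda$ is smooth; for a variation $b(s)$ one computes $\frac{d}{ds}\int_{F_{b(s)}}\lambda=\int_{F_{b(s)}}\iota_{\partial_s}\d\lambda=0$, because $\iota_R\d\lambda=0$ along the Reeb orbit $F_{b(s)}$ (the $d$-exact term of Cartan's formula integrates to zero over the loop). Hence $b\mapsto\int_{F_b}\lambda$ is constant on the connected set $B_j$, equal to some $\mathcal A_j>0$, so every closed geodesic meeting $X$ has length in the finite set $\{\mathcal A_1,\dots,\mathcal A_m\}$; put $M=\max_j\mathcal A_j$. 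If $\gamma^*\subset T^1\Sigma\setminus X$ is any periodic orbit with primitive period $T^*$, choose $p_k\to\gamma^*(0)$ with $p_k\in X$ (density of $X$); the orbit through $p_k$ has period $\mathcal A_{j(k)}\le M$, and along a subsequence this value is a constant $A\le M$, whence $\psi^A(p_k)=p_k$ and, passing to the limit, $\psi^A(\gamma^*(0))=\gamma^*(0)$, so $T^*\le A\le M$. Thus \emph{every} closed geodesic of the Finsler metric has length at most $M$.

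Finally, if $\pi_1(\Sigma)$ is infinite it contains an element $\rho_0$ of infinite order, and minimizing the Finsler length over the free homotopy class of $\rho_0^n$ produces closed geodesics whose lengths grow without bound (the length dominates the displacement $d_\Phi(\tilde x,\rho_0^n\tilde x)$ in the universal cover, which grows linearly in $n$). This contradicts the bound $M$; hence $\pi_1(\Sigma)$ is finite, $\Sigma$ is $\sphere{2}$ or $\RP^2$, and $\dim\homology{1}{\Sigma}{\Q}=0$. The delicate points will be the bookkeeping around Proposition~\ref{prop:refinement} — that only finitely many base components survive the refinement, and that $\Gamma$ restricted to a regular energy level is genuinely a tame polyhedron — together with checking that the constant-period computation is untouched by the orbit-equivalence reparametrisation, which is precisely why one passes to the Reeb flow $\psi$ at the outset.
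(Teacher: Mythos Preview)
Your approach is genuinely different from the paper's. The paper argues homologically: it shows that the projected homology class of every closed orbit on $H^{-1}(c)$ lies, up to a rational multiple, in one of finitely many lines $\Q\homologyclass{F'_{p'}}$ (one per component of the refined base), so $\homology{1}{\Sigma}{\Q}$ is a finite union of $1$-dimensional subspaces and hence at most $1$-dimensional; the Klein bottle is then excluded by pulling back to the torus double cover. Your argument instead exploits the contact geometry to bound primitive Reeb periods and then seeks a metric contradiction. This is a nice alternative: the period-constancy computation for smooth families of closed Reeb orbits is clean and avoids the broken-curve approximation the paper uses.

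There is, however, a real gap in your last step. What your argument establishes is that every \emph{primitive} closed orbit of $\psi$ has period at most $M$: the fibres $F_b$ are primitive orbits with period $\mathcal A_{j(b)}$, and your limit argument transfers only the primitive-period bound to orbits in $\Gamma$. Your sentence ``every closed geodesic has length at most $M$'' is therefore only true for primitive closed geodesics. But a length-minimiser in the free homotopy class of $\rho_0^{\,n}$ need not be primitive; on $\T^2$, for instance, the minimiser in the class $n\rho_0$ is typically the $n$-fold iterate of a fixed primitive geodesic in the class $\rho_0$, and its primitive period stays bounded while its length grows. So as written the contradiction does not fire.

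The repair is easy and keeps your strategy intact. By the part of the argument already established in the paper, $\Sigma$ is $\T^2$ or the Klein bottle. On $\T^2$ choose primitive classes $(p,q)\in\Z^2$ with $\gcd(p,q)=1$ and $|(p,q)|\to\infty$; a length-minimiser $\gamma_{(p,q)}$ in such a class is a \emph{primitive} closed orbit (if it were a $k$-fold iterate, projecting to $\homology{1}{\T^2}{\Z}$ would give $k\mid\gcd(p,q)=1$), and its length is bounded below by the displacement in the universal cover, which tends to infinity. This genuinely contradicts the bound $M$. For the Klein bottle, pass to the torus double cover exactly as the paper does; $1$-semisimplicity pulls back, and the previous paragraph applies. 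With this adjustment your proof goes through; the remaining ``delicate points'' you flag (finiteness of components after refinement, tameness of $\Gamma\cap H^{-1}(c)$) are handled by the tame-polyhedron hypothesis and compactness of $H^{-1}(c)$, as you suspected.
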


  \begin{proof}[Lemma \ref{lem:rkh1}]
    Let $\varphi^c$ be the Hamiltonian flow of $H$ restricted to the
    iso-energy set $H^{-1}(c)$.

    Let $X' \subset X_c'$ be a component of $X_c'$ and let
    $\xymatrix{F' \ar@{^{(}->}[r]^{} & X' \ar@{->>}[r]^{} & P'}$ be
    the induced fibration of $X'$ by the closed orbits of
    $\varphi^c$. Each orbit is homologous to the homology class of the
    fibre $F'_{p'}$.

    Suppose that $\dim \homology{1}{\Sigma}{\Q} > 0$. 
    Let $\omega \in \homology{1}{\Sigma}{\Q}$ be an integral homology
    class. Each such class $\omega$ contains a closed geodesic, and so
    contains the projection of a closed orbit $\gamma$ of the
    Hamiltonian flow of $H$ restricted to $H^{-1}(c)$. If $\gamma
    \subset X_c'$, then $\omega = \homologyclass{\gamma} \in \Z
    \homologyclass{F'_{p'}}$ for some regular fibre $F'_{p'}$ as in
    the preceding paragraph. If $\gamma \not\subset X_c'$, then, by
    the density of $X_c'$ and continuity in initial conditions of
    $\varphi^c$, each multiple of $\gamma$ is approximated by a broken
    integral curve in $X_c'$, {\ie} a curve $w$ of the form

    \[
    w(t) =
    \begin{cases}
      \varphi^c_{2Tt}(p) & \text{ if } 0 \leq t \leq \frac{1}{2} \\
      \delta_p(t)        & \text{ if } \frac{1}{2} \leq t \leq 1.
    \end{cases}
    \]
    where $\delta_p : [\frac{1}{2},1] \to H^{-1}(c) \cap T^*_{\pi(p)}
    \Sigma$, $p \in X_c'$ and $\varphi^c_T(p) = p$. It follows that
    some multiple of the homology class $\omega$ is homologous to a
    closed orbit of $\varphi^c | X_c'$ and therefore that $\Q \omega
    \subset \Q \homologyclass{F_p}$.

    This proves that $\homology{1}{\Sigma}{\Q}$ is a finite union of
    $1$-dimensional sub-spaces and therefore it is at most
    $1$-dimensional.

    If $\dim \homology{1}{\Sigma}{\Q} = 1$, then $\Sigma$ is a Klein
    bottle and has a double cover $\cover{\Sigma} \to \Sigma$ where
    $\cover{\Sigma}$ is a $2$-torus. The pulled-back Hamiltonian
    $\cover{H}$ is $1$-semisimple. The conclusion of the previous
    paragraph implies that $\dim \homology{1}{\cover{\Sigma}}{\Q} \leq
    1$--an absurdity. Therefore, $\dim \homology{1}{\Sigma}{\Q}$ must
    be $0$.
    
  \end{proof}

  Given Lemma \ref{lem:rkh1}, the proof of Theorem \ref{thm:kozB} is
  complete.

\end{proof}

\bibliographystyle{amsplain}
\bibliography{kozlov}

\def\cprime{$'$} \def\cprime{$'$} \def\cydot{\leavevmode\raise.4ex\hbox{.}}
\providecommand{\bysame}{\leavevmode\hbox to3em{\hrulefill}\thinspace}
\providecommand{\MR}{\relax\ifhmode\unskip\space\fi MR }
\providecommand{\MRhref}[2]{%
  \href{http://www.ams.org/mathscinet-getitem?mr=#1}{#2}
}
\providecommand{\href}[2]{#2}
\begin{thebibliography}{10}

\bibitem{MR1345386}
V.~I. Arnol{\cprime}d, \emph{Mathematical methods of classical mechanics},
  Graduate Texts in Mathematics, vol.~60, Springer-Verlag, New York, 199?,
  Translated from the 1974 Russian original by K. Vogtmann and A. Weinstein,
  Corrected reprint of the second (1989) edition. \MR{1345386 (96c:70001)}

\bibitem{MR2671281}
Misha Bialy, \emph{Integrable geodesic flows on surfaces}, Geom. Funct. Anal.
  \textbf{20} (2010), no.~2, 357--367. \MR{2671281 (2011i:53137)}

\bibitem{MR2031454}
Alexey~V. Bolsinov and Bo{\v{z}}idar Jovanovi{\'c}, \emph{Complete involutive
  algebras of functions on cotangent bundles of homogeneous spaces}, Math. Z.
  \textbf{246} (2004), no.~1-2, 213--236. \MR{2031454 (2004m:37104)}

\bibitem{MR2136534}
Leo~T. Butler, \emph{Invariant fibrations of geodesic flows}, Topology
  \textbf{44} (2005), no.~4, 769--789. \MR{2136534 (2006c:53092)}

\bibitem{MR2256655}
\bysame, \emph{An optical {H}amiltonian and obstructions to integrability},
  Nonlinearity \textbf{19} (2006), no.~9, 2123--2135. \MR{2256655
  (2007f:37081)}

\bibitem{arxiv1208.1406}
\bysame, \emph{A {G}eneralization of {K}ozlov's {T}heorem on {I}ntegrable
  {M}echanical {S}ystems on {S}urfaces}, ArXiV Preprint 1208.1460v1 (2012),
  1--7.

\bibitem{MR906389}
Pierre Dazord and Thomas Delzant, \emph{Le probl\`eme g\'en\'eral des variables
  actions-angles}, J. Differential Geom. \textbf{26} (1987), no.~2, 223--251.
  \MR{906389 (88j:58032)}

\bibitem{MR2746954}
Eva Glasmachers and Gerhard Knieper, \emph{Characterization of geodesic flows
  on {$T^2$} with and without positive topological entropy}, Geom. Funct. Anal.
  \textbf{20} (2010), no.~5, 1259--1277. \MR{2746954 (2012a:53162)}

\bibitem{MR2887674}
\bysame, \emph{Minimal geodesic foliation on {$T^2$} in case of vanishing
  topological entropy}, J. Topol. Anal. \textbf{3} (2011), no.~4, 511--520.
  \MR{2887674}

\bibitem{MR556099}
V.~V. Kozlov, \emph{Topological obstacles to the integrability of natural
  mechanical systems}, Dokl. Akad. Nauk SSSR \textbf{249} (1979), no.~6,
  1299--1302. \MR{556099 (81c:58036)}

\bibitem{MR1411677}
\bysame, \emph{Symmetries, topology and resonances in {H}amiltonian mechanics},
  Ergebnisse der Mathematik und ihrer Grenzgebiete (3) [Results in Mathematics
  and Related Areas (3)], vol.~31, Springer-Verlag, Berlin, 1996, Translated
  from the Russian manuscript by S. V. Bolotin, D. Treshchev and Yuri Fedorov.
  \MR{1411677 (97j:70010b)}

\bibitem{MR2395221}
Yiming Long, \emph{Collection of problems proposed at {I}nternational
  {C}onference on {V}ariational {M}ethods}, Front. Math. China \textbf{3}
  (2008), no.~2, 259--273. \MR{2395221 (2009h:58001)}

\bibitem{MR0365629}
N.~N. Nehoro{\v{s}}ev, \emph{Action-angle variables, and their
  generalizations}, Trudy Moskov. Mat. Ob\v s\v c. \textbf{26} (1972),
  181--198. \MR{0365629 (51 \#1881)}

\bibitem{MR897007}
I.~A. Ta{\u\i}manov, \emph{Topological obstructions to the integrability of
  geodesic flows on nonsimply connected manifolds}, Izv. Akad. Nauk SSSR Ser.
  Mat. \textbf{51} (1987), no.~2, 429--435, 448. \MR{897007 (88j:58104)}

\end{thebibliography}

\end{document}